\newtheorem{theorem}{Theorem}[section]
\newtheorem{proposition}[theorem]{Proposition}
\newtheorem{remark}[theorem]{Remark}
\newenvironment{proof}[1][Proof]{\textbf{#1.} }{\ \rule{0.5em}{0.5em}}
\begin{document}

\title{ A singular stochastic differential equation driven by
fractional Brownian motion\footnote{2000 Mathematics Subject Classification. 
Primary  60H10, Secondary 60H07,  60H05\newline
Keywords:  Absolute continuity,  fractional Brownian motion, Malliavin calculus, 
moment  estimate,  rough path, singular stochastic
differential equation  }
 }
\author{Yaozhong Hu\thanks{
Y. Hu is supported  by the NSF  grant
 DMS0504783.}, David Nualart\thanks{  D. Nualart is  supported    by
  the NSF grant   DMS0604207.}, Xiaoming Song \\
Department of Mathematics \\
University of Kansas \\
Lawrence, Kansas, 66045 USA}

\date{}
\maketitle
\begin{abstract} In this paper we
study a singular stochastic differential equation driven
by an additive  fractional Brownian motion with Hurst parameter $H>\frac 12$.
Under some assumptions on the drift, we show that there
is a unique solution, which has moments of all orders.
We also apply the techniques of Malliavin calculus to
prove that the solution has an absolutely continuous law at any time $t>0$.
\end{abstract}

\section{Introduction}

The aim of this paper is to study a   stochastic differential
equation, driven by an additive fractional Brownian motion (fBm) with Hurst
parameter $H>\frac{1}{2}$, assuming that the drift $f(t,x)$ has a
singularity at $x=0$ of the form $x^{-\alpha }$, where $\alpha >\frac{1}{H}-1
$. \

The study of this type of equations is partially  motivated by the
 equation  satisfied by the $d$-dimensional
fractional Bessel process $R_{t}=|B_{t}^{H}|$, $d\geq 2$ (see Guerra and
Nualart \cite{GN}, and Hu and Nualart \cite{HN}):%
\begin{equation*}
R_{t}=Y_{t}+H(d-1)\int_{0}^{t}\frac{s^{2H-1}}{R_{s}}ds,
\end{equation*}%
where the process $Y_{t}$ is equal to a divergence integral, $%
Y_{t}=\int_{0}^{t}\sum_{i=1}^{d}\frac{B_{s}^{H,i}}{R_{s}}\delta B_{s}^{H,i}$%
. \ The process $Y$ is not a one-dimensional fractional Brownian motion (see
Eisenbaum and Tudor \cite{ET} and Hu and Nualart \cite{HN} for some
results in this direction), although it shares with the fBm similar properties of scaling
and  $\frac 1H$-variation. Notice that here the initial condition is zero.
 
We are considering the case where the initial condition  $x_{0}$ is
strictly positive. Using arguments
based on fractional calculus  inspired by the estimates obtained by Hu and
Nualart in \cite{HN2}, we will show that there exist a unique
global solution which has moments of all
orders, and even negative moments, in the particular case $f(t,x)=Kx^{-1}$,
if $t$ is small enough.   
 We will also  show that the solution has an
absolutely continuous law with respect to the Lebesgue measure, using the
techniques of Malliavin calculus for the fractional Brownian motion.
As an application we obtain the existence of a
unique solution with moments of all orders for a fractional version 
of   the  CIR model in mathematical finance (\cite{CIR}), which is a singular 
stochastic differential equation driven by fractional Brownian motion 
with the diffusion coefficient being $\sqrt x$.

The paper is organized as follows. In the first section we will consider the
case of a deterministic differential equation driven by a H\"{o}lder
continuous function, and with singular drift. The case of the factional
Brownian motion is developed in Section \ 3.

\setcounter{equation}{0}

\section{Singular equations driven by rough paths}

For any $s\leq t$, $C([s,t])$ denotes the Banach space of continuous
functions equipped with the supremum norm, and $C^{\beta }([s,t])$ denotes
the space of H\"{o}lder continuous functions of order $\beta $ on $[s,t]$.
For any $x\in C([s,t])$ we put
\begin{equation*}
\Vert x\Vert _{s,t,\infty }=\sup \{|x(r)|,s\leq r\leq t\}\,,\quad
\end{equation*}%
and if $x\in C^{\beta }([s,t])$ we put
\begin{equation*}
\Vert x\Vert _{s,t,\beta }=\sup \{\frac{|x(u)-x(v)|}{|u-v|^{\beta }},s\leq
u,v\leq t\}\,\,.
\end{equation*}%
Fix $\beta \in (1/2,1)$. Suppose that $\varphi :\mathbb{R}_{+}\rightarrow
\mathbb{R}$ is a function such that $\varphi (0)=0$, and $\varphi \in
C^{\beta }([0,T])$ for all $T>0$. Consider the following deterministic
differential equation driven by the rough path $\varphi $
\begin{equation}
x_{t}=x_{0}+\int_{0}^{t}f(s,x_{s})ds+\varphi (t)\,,  \label{e.2.1}
\end{equation}%
where $x_{0}>0$ is a constant. We are going to impose the following \
assumptions on the coefficient $f$:

\begin{description}
\item[(i)] $f:[0,\infty )\times (0,\infty )\rightarrow \lbrack 0,\infty )$
is a nonnegative, continuous function which has a continuous partial
derivative with respect to $x$ such that $\partial _{x}f(t,x)\leq 0$ for all
$t>0,\,x>0$.

\item[(ii)] There exists $x_{1}>0$ and $\alpha >\frac{1}{\beta }-1$ such
that $f(t,x)\geq g(t)x^{-\alpha }$, for all $t\geq 0$ and $\,x\in (0,x_{1})$%
, where $g(t)$ is a nonnegative continuous function with $g(t)>0$ for all $%
t>0$.

\item[(iii)] $f(t,x) \leq h(t)\left( 1+\frac 1x \right)$ for all $t\geq 0$ and $x>0$, where  $h(t)$ is a certain
nonnegative locally bounded function.
\end{description}

\begin{theorem}
Under the  assumptions (i)-(ii), there exists a unique solution $x_{t}$ to
equation (\ref{e.2.1}) such that $x_{t}>0$ on $[0,\infty )$.
\end{theorem}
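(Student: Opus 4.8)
The plan is to produce a maximal solution by classical ODE arguments away from the singular set $\{x=0\}$, to show it can neither reach $0$ nor blow up in finite time (so that it is global and positive), and finally to deduce uniqueness from $\partial_x f\le 0$. Put $y_t=x_t-\varphi(t)$; then (\ref{e.2.1}) is equivalent to $\dot y_t=F(t,y_t)$, $y_0=x_0$, with $F(t,y):=f(t,y+\varphi(t))$. Since $f$ and $\varphi$ are continuous and $\partial_x f$ is continuous, $F$ is continuous and locally Lipschitz in $y$ on the open set $\{(t,y):t\ge 0,\ y+\varphi(t)>0\}$, which contains $(0,x_0)$ because $x_0>0$. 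By Cauchy--Lipschitz there is a unique solution on a maximal interval $[0,T^{*})$, taking values in $(0,\infty)$; and if $T^{*}<\infty$ then $(t,y_t)$ eventually leaves every compact subset of that set, which --- $t$ being bounded and $x_t=y_t+\varphi(t)$ continuous on the connected interval $[0,T^{*})$ --- forces either $x_t\to 0$ or $x_t\to+\infty$ as $t\uparrow T^{*}$. It thus suffices to exclude both possibilities.

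Suppose first $x_t\to 0$ as $t\uparrow T^{*}<\infty$, and write $\tau=T^{*}$. From (\ref{e.2.1}), $\int_0^t f(s,x_s)\,ds=x_t-x_0-\varphi(t)$ stays bounded, so $\int_0^{\tau}f(s,x_s)\,ds<\infty$. Fix $\delta<\min(x_0,x_1)$ and set $\sigma=\sup\{t<\tau:x_t=\delta\}$; one checks $0<\sigma<\tau$ and $0<x_t<\delta<x_1$ for $t\in(\sigma,\tau)$, so by (ii) $f(t,x_t)\ge g(t)x_t^{-\alpha}\ge M\,x_t^{-\alpha}$ on $(\sigma,\tau)$, where $M:=\min_{t\in[\sigma,\tau]}g(t)>0$; put $\Lambda:=\Vert\varphi\Vert_{\sigma,\tau,\beta}$. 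Using $f\ge 0$ in (\ref{e.2.1}) between $s\in(\sigma,\tau)$ and $\tau$ yields $x_s\le\varphi(s)-\varphi(\tau)\le\Lambda(\tau-s)^{\beta}$; using instead $f(u,x_u)\ge M x_u^{-\alpha}$ between $s$ and $t\in(s,\tau)$ and letting $t\uparrow\tau$ (monotone convergence) yields $M\int_s^{\tau}x_u^{-\alpha}\,du\le\Lambda(\tau-s)^{\beta}$. Since $x_u^{-\alpha}\ge\Lambda^{-\alpha}(\tau-u)^{-\alpha\beta}$, these combine to
\[
M\Lambda^{-\alpha}\int_s^{\tau}(\tau-u)^{-\alpha\beta}\,du\ \le\ \Lambda\,(\tau-s)^{\beta},\qquad \sigma<s<\tau .
\]
If $\alpha\beta\ge 1$ the left side is $+\infty$; if $\alpha\beta<1$ it equals $M\Lambda^{-\alpha}(\tau-s)^{1-\alpha\beta}/(1-\alpha\beta)$, so $(\tau-s)^{\,1-\beta-\alpha\beta}$ would stay bounded as $s\uparrow\tau$, which is impossible since $\alpha>\frac1\beta-1$ gives $1-\beta-\alpha\beta<0$. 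Either way this is a contradiction, so $x_t\to 0$ cannot happen; in particular $x_t>0$ on $[0,T^{*})$.

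Suppose instead $x_t\to+\infty$ as $t\uparrow T^{*}<\infty$, and take $t_1<T^{*}$ with $x_t>x_1$ for $t\in[t_1,T^{*})$. By (i), $f(t,x_t)\le f(t,x_1)\le C_1:=\max_{t\in[0,T^{*}]}f(t,x_1)<\infty$, so (\ref{e.2.1}) gives $x_t\le x_{t_1}+C_1 T^{*}+2\Vert\varphi\Vert_{0,T^{*},\infty}$ for $t\in[t_1,T^{*})$, contradicting $x_t\to+\infty$. Hence $T^{*}=\infty$ and $x_t>0$ for all $t\ge 0$. For uniqueness, if $x$ and $\bar x$ are two such solutions, then $d_t:=x_t-\bar x_t$ has $d_0=0$ and, the $\varphi$-terms cancelling, $\dot d_t=f(t,x_t)-f(t,\bar x_t)$, which is continuous in $t$; since $\partial_x f\le 0$ we have $d_t\,(f(t,x_t)-f(t,\bar x_t))\le 0$, hence $\frac{d}{dt}(d_t^2)\le 0$ and $d_t\equiv 0$.

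The delicate part is ruling out $x_t\to 0$: everything hinges on extracting, from just the H\"older exponent $\beta$ of the forcing $\varphi$ and the blow-up rate $x^{-\alpha}$ of the drift, the matching one-sided bounds $x_s\le\Lambda(\tau-s)^{\beta}$ and $\int_s^{\tau}x_u^{-\alpha}\,du\le M^{-1}\Lambda(\tau-s)^{\beta}$, and combining them so that the condition $\alpha>\frac1\beta-1$ is precisely the threshold at which a contradiction appears; this is the point where fractional-calculus estimates in the spirit of \cite{HN2} enter. The facts $0<\sigma<\tau$, $0<x_t<\delta<x_1$ on $(\sigma,\tau)$, and the passage to the limit $t\uparrow\tau$ are routine and omitted from this sketch.
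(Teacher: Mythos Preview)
Your proof is correct and follows essentially the same route as the paper: the key contradiction (bounding $x_s\le\Lambda(\tau-s)^\beta$ from the equation with $f\ge 0$, feeding this into $f\ge M x^{-\alpha}$, and comparing exponents using $\alpha>\frac1\beta-1$) is identical, as is the uniqueness argument via $\frac{d}{dt}(x_t-\bar x_t)^2\le 0$. You are in fact more careful than the paper in two places --- you explicitly rule out blow-up to $+\infty$ using $\partial_x f\le 0$, and you treat the case $\alpha\beta\ge 1$ separately --- but your closing remark is slightly off: no fractional-calculus estimates are used in this argument (that machinery enters only in the moment bounds of Theorem~\ref{t1}).
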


\begin{proof}
It is easy to see that there exists a continuous local solution $x_{t}$ to
equation (\ref{e.2.1}) on some interval $[0,T)$, where $T$ satisfies $T=\inf
\{t>0:x_{t}=0\}$. Then it suffices to show that $T=\infty $. Suppose that $%
T<\infty $. Then, then $x_{t}\rightarrow 0$, as $t\uparrow T$. Since $%
\varphi \in C^{\beta }([0,T])$, there exists a constant $C>0$, such that $%
|\varphi (t)-\varphi (s)|\leq C|t-s|^{\beta }$, \ for all $s,t\in \lbrack
0,T]$. Since $x_{t}$ satisfies the equation (\ref{e.2.1}), for all $t\in
\lbrack 0,T]$ we have
\begin{equation*}
0=x_{T}=x_{t}+\int_{t}^{T}f(s,x_{s})ds+\varphi (T)-\varphi (t).
\end{equation*}%
Since $f(s,x_{s})$ is positive, for all $t\in \lbrack 0,T]$ we have
\begin{equation*}
x_{t}\leq x_{t}+\int_{t}^{T}f(s,x_{s})ds=\varphi (t)-\varphi (T)\leq
C(T-t)^{\beta }\,.
\end{equation*}%
From the assumption (ii), there exist $t_{0}\in (0,T)$ and a constant $K>0$,
such that  $g(t)\geq K$ and $x_{s}\in (0,x_{1})$ for all $t\in \lbrack
t_{0},T)$. Then, for all $t\in \lbrack t_{0},T)$ we have
\begin{equation*}
f(t,x_{t})\geq \frac{g(t)}{x_{t}{}^{\alpha }}\geq \frac{K}{x_{t}{}^{\alpha }}%
\geq \frac{K}{C^{\alpha }\left( T-t\right) ^{{\alpha }{\beta }}}\,.
\end{equation*}%
Consequently, for all $t\in \lbrack t_{0},T)$ we obtain
\begin{equation*}
\frac{K(T-t)^{1-\alpha \beta }}{C^{\alpha }(1-\alpha \beta )}=\int_{t}^{T}%
\frac{K}{C^{\alpha }(T-s)^{\alpha \beta }}ds\leq
\int_{t}^{T}f(s,x_{s})ds\leq C(T-t)^{\beta },\,
\end{equation*}%
which is a contradiction because $1-\alpha \beta -\beta <0$ and $t$
can be arbitrarily close to $T$. Therefore, $%
T=\infty $. This proves the existence of the solution for all $t$.

Now we show the uniqueness. If $x_{1,t}$ and $x_{2,t}$ are two positive
solutions to equation (\ref{e.2.1}), then
\begin{equation*}
x_{1,t}-x_{2,t}=\int_{0}^{t}[f(s,x_{1,s})-f(s,x_{2,s})]ds\,.
\end{equation*}%
Because $\partial _{x}f(t,x)\leq 0$ for all  $t>0,\,x>0$, we deduce
\begin{equation*}
(x_{1,t}-x_{2,t})^{2}=2%
\int_{0}^{t}(x_{1,s}-x_{2,s})[f(s,x_{1,s})-f(s,x_{2,s})]ds\leq 0.
\end{equation*}%
So $x_{1,t}=x_{2,t}$.\newline

Thus we conclude that there exists a unique solution $x_t$ to the equation (%
\ref{e.2.1}) such that $x_t>0$ on $[0, \infty)$.
\end{proof}

\begin{remark}
From the continuity of $x_{t}$ and $f(t,x)$ and the H\"{o}lder continuity of
$\varphi (t)$, we   obtain that for any $T>0$, $x\in C^{\beta }([0,T])$.
\end{remark}

The next result provides an estimate on the supremum norm of the solution in
terms of the H\"{o}lder norm of the driving function $\varphi $.

\begin{theorem}
\label{t1}Let the assumptions (i)-(iii) be satisfied. If $x_{t}$ is the
solution to equation(\ref{e.2.1}), then for any $\gamma >2$, and for any $%
T>0 $,
\begin{equation}
\Vert x\Vert _{0,T,\infty }\leq C_{1,{\gamma },{\beta },T}(|x_{0}|+1)\exp
\left\{ C_{2,{\gamma },{\beta },T}\left( 1+\Vert \varphi \Vert _{0,T,\beta
}^{\frac{\gamma }{\beta (\gamma -1)}}\right) \right\} \,,  \label{e2}
\end{equation}%
where $C_{1,{\gamma },{\beta },T}$ and $C_{2,{\gamma },{\beta },T}$ are
constants depending on $\beta ,{\gamma }$, $\left\| h\right\| _{0,T,\infty }$
and $T$.
\end{theorem}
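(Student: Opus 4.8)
The plan is to bound $x$ from above through assumption (iii) together with the behaviour of $x^{-1}$, and to bound $x$ from below through assumptions (i)--(ii) by a ``no blow--down'' argument. First, from $x_t=x_0+\int_0^tf(s,x_s)ds+\varphi(t)$, the nonnegativity of $f$, the bound $f(s,x_s)\le h(s)(1+x_s^{-1})$, and $\|\varphi\|_{0,T,\infty}\le\|\varphi\|_{0,T,\beta}T^\beta$ (recall $\varphi(0)=0$), one gets immediately
\begin{equation*}
\|x\|_{0,T,\infty}\le|x_0|+\|\varphi\|_{0,T,\beta}T^\beta+\|h\|_{0,T,\infty}T+\|h\|_{0,T,\infty}\int_0^Tx_s^{-1}\,ds .
\end{equation*}
Since $\int_0^Tx_s^{-1}ds\le T/m$ with $m:=\min_{0\le t\le T}x_t$ (the minimum is attained and positive by the previous theorem), everything reduces to a lower bound on $m$.

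The heart of the proof is this lower bound. On the one hand $x_t\ge x_0+\varphi(t)\ge x_0-\|\varphi\|_{0,T,\beta}t^\beta$, so $x_s\ge x_0/2$ on $[0,t_0]$ with $t_0:=(x_0/(2\|\varphi\|_{0,T,\beta}))^{1/\beta}$; this disposes of the region near the origin, where the weight $g$ of (ii) may vanish, and on $[t_0,T]$ we have $g\ge c_0:=\inf_{[t_0,T]}g>0$. We may assume $m<\tfrac12(x_0\wedge x_1)$ (otherwise $m$ is bounded below by a fixed constant). Let $t^*$ be a minimiser, necessarily with $t^*>t_0$, and set $\sigma:=\sup\{s<t^*:x_s=2m\}$, well defined since $x_0>2m$; then $m\le x_s<2m<x_1$ on $[\sigma,t^*]$, and $[\sigma,t^*]\subset[t_0,T]$ because $x\ge x_0/2>2m$ on $[0,t_0]$. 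As $f\ge0$, the drop of $x$ from $2m$ to $m$ on $[\sigma,t^*]$ is caused entirely by $\varphi$, hence $m\le\|\varphi\|_{0,T,\beta}(t^*-\sigma)^\beta$; on the other hand, inserting the equation on $[\sigma,t^*]$ into assumption (ii),
\begin{equation*}
c_0(2m)^{-\alpha}(t^*-\sigma)\le\int_\sigma^{t^*}g(s)x_s^{-\alpha}\,ds\le\int_\sigma^{t^*}f(s,x_s)\,ds=-m+\varphi(\sigma)-\varphi(t^*)\le\|\varphi\|_{0,T,\beta}(t^*-\sigma)^\beta .
\end{equation*}
Eliminating $t^*-\sigma$ between the two inequalities, and using that $1-\beta(1+\alpha)<0$ --- which is precisely the hypothesis $\alpha>\tfrac1\beta-1$ --- makes the resulting power of $m$ negative and yields a lower bound of the form $m\ge c\,c_0^{\,\beta/(\beta(1+\alpha)-1)}\,\|\varphi\|_{0,T,\beta}^{-1/(\beta(1+\alpha)-1)}$ with an explicit constant $c$.

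Substituting this back into the first display gives $\|x\|_{0,T,\infty}\le C(|x_0|+1)\,\Phi(\|\varphi\|_{0,T,\beta})$, where $\Phi$ is (in the regime where $c_0$ is bounded below by a fixed constant) a power function, and at worst grows with a negative power of $c_0$. Bounding such a $\Phi(r)$ by $C_1\exp(C_2r^{\gamma/(\beta(\gamma-1))})$ --- which is legitimate for every $\gamma>2$ because $\gamma/(\beta(\gamma-1))>1/\beta>0$, the choice of $\gamma$ entering only through the constants --- produces the bound (\ref{e2}). The quantities $c_0$ and $c$, and hence $C_1,C_2$, a priori also involve $\alpha$ and $g$; the $\alpha$-free exponential form of (\ref{e2}) is the convenient packaging needed later, where one must be able to take the exponent $\gamma/(\beta(\gamma-1))$ both tunable and, for suitable $\gamma$, strictly below $2$.

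The delicate step is the one above: one has to locate the minimum of $x$, control it through the competition between the singular drift $g(s)x_s^{-\alpha}$ and the H\"older ``budget'' $\|\varphi\|_{0,T,\beta}(t^*-\sigma)^\beta$ of the forcing, and simultaneously absorb the near-origin degeneracy of $g$ by reverting to the elementary bound $x_t\ge x_0+\varphi(t)$ on $[0,t_0]$. Keeping the bookkeeping uniform enough to reach a clean estimate of the stated shape --- and pinning down the exact power of $\|\varphi\|_{0,T,\beta}$, which is where the interplay between $t_0\sim\|\varphi\|_{0,T,\beta}^{-1/\beta}$ and the exponents $\alpha,\beta$ enters --- is the main obstacle; everything else is routine estimation of the integral equation.
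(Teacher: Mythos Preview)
Your strategy is genuinely different from the paper's, and it has a real gap. The issue is the constant $c_0=\inf_{[t_0,T]}g$: you define $t_0=(x_0/(2\|\varphi\|_{0,T,\beta}))^{1/\beta}$, so $t_0\to 0$ as $\|\varphi\|_{0,T,\beta}\to\infty$, and hence $c_0$ depends on $\|\varphi\|_{0,T,\beta}$. The hypotheses only say that $g$ is continuous and $g(t)>0$ for $t>0$; nothing prevents $g$ from vanishing arbitrarily fast at $0$. For instance, if $g(t)=\exp(-t^{-2})$ then $c_0\sim\exp(-C\|\varphi\|_{0,T,\beta}^{2/\beta})$, your lower bound on $m$ is of order $\exp(-C'\|\varphi\|_{0,T,\beta}^{2/\beta})$, and the resulting upper bound on $\|x\|_{0,T,\infty}$ is of order $\exp(C''\|\varphi\|_{0,T,\beta}^{2/\beta})$. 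But for every $\gamma>2$ one has $\gamma/(\beta(\gamma-1))<2/\beta$, so this cannot be absorbed into the stated exponential. Your sentence ``which is legitimate for every $\gamma>2$ because $\gamma/(\beta(\gamma-1))>1/\beta>0$'' is exactly the step that fails: positivity of the target exponent is not enough, you need it to dominate the exponent you actually produced. Even when the packaging works, your constants depend on $x_0$, $\alpha$, and the behaviour of $g$ near $0$, none of which appear in the theorem's list.

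The paper avoids all of this by never seeking a lower bound on $x$. It sets $y_t=x_t^\gamma$ and observes that, under (iii), $f(s,x_s)x_s^{\gamma-1}\le h(s)(x_s^{\gamma-1}+x_s^{\gamma-2})=h(s)(y_s^{1-1/\gamma}+y_s^{1-2/\gamma})$; since $\gamma>2$ both exponents are in $(0,1)$, so the drift is sublinear in $y$ with no singularity. The Young integral $\int y_s^{1-1/\gamma}d\varphi(s)$ is then estimated via fractional integration by parts, giving a bound $\|y\|_{s,t,\infty}\le 2|y_s|+C$ on intervals of length $\Delta\sim\|\varphi\|_{0,T,\beta}^{-\gamma/(\beta(\gamma-1))}$, and iteration over $[T/\Delta]+1$ subintervals produces the exponential with exactly the exponent $\gamma/(\beta(\gamma-1))$. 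Only (iii) is used, which is why the constants depend only on $\beta,\gamma,T,\|h\|_{0,T,\infty}$. The role of $\gamma>2$ is not cosmetic packaging at the end; it is what makes $y^{1-2/\gamma}$ nonsingular in the first place.
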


\begin{proof}
Fix a time interval $[0,T]$. Let $y_{t}=x_{t}^{\gamma }$. Then the chain
rule applied to $x_{t}^{\gamma }$ yields
\begin{equation}
y_{t}=x_{0}^{\gamma }+\gamma \int_{0}^{t}f(s,y_{s}^{\frac{1}{\gamma }%
})y_{s}^{1-\frac{1}{\gamma }}ds+\gamma \int_{0}^{t}y_{s}^{1-\frac{1}{\gamma }%
}d\varphi (s).  \label{e.2.2}
\end{equation}%
The second integral in (\ref{e.2.2}) is a Riemann-Stieltjes integral (see
Young \cite{Yo}). From Assumption (iii), we have
\begin{eqnarray}
|y_{t}-y_{s}| &=&\gamma \left| \int_{s}^{t}f(u,y_{u}^{\frac{1}{\gamma }%
})y_{u}^{1-\frac{1}{\gamma }}du+\int_{s}^{t}y_{u}^{1-\frac{1}{\gamma }%
}d\varphi (u)\right|  \notag \\
&\leq &K_{T}\gamma \int_{s}^{t}\left[
y_{u}^{1-\frac{2}{\gamma }}+y_{u}^{1-\frac{1}{\gamma }}\right]
du+\gamma \left|
\int_{s}^{t}y_{u}^{1-\frac{1}{\gamma }}d\varphi (u)\right| ,  \label{f5}
\end{eqnarray}%
where $K_{T}=\sup_{t\in \lbrack 0,T]}h(t)$. Since $\gamma >2$, we have
\begin{equation}
\int_{s}^{t}y_{u}^{1-\frac{2}{\gamma }}du\leq \left[
\Vert y\Vert _{s,t,\infty }^{1-%
\frac{2}{\gamma }}+\Vert y\Vert _{s,t,\infty }^{1-%
\frac{1}{\gamma }}\right] (t-s)\,.  \label{e.2.4}
\end{equation}%
Since ${\alpha }>\frac{1}{{\beta }}-1$, we have ${\alpha }>{\alpha }{\beta }%
>1-{\beta }$. Thus $1-\alpha <\beta $. From Remark 1.1, we know that $y\in
C^{\beta }([0,T])$, for any $T>0$. A fractional
integration by parts formula (see Z\"ahle \cite{Za})
yields
\begin{equation}
\int_{s}^{t}y_{u}^{1-\frac{1}{\gamma }}d\varphi (u)=(-1)^{-\alpha
}\int_{s}^{t}D_{s+}^{\alpha }y_{u}^{1-\frac{1}{\gamma }}D_{t-}^{1-\alpha
}\varphi _{t-}(u)du\,,  \label{f3}
\end{equation}%
where $\varphi _{t-}(u)=\varphi (u)-\varphi (t)$, and $D_{s+}^{\alpha }$ and
$D_{t-}^{1-\alpha }$ denote the left and right-sided fractional derivatives
of orders $\alpha $ and $1-\alpha $, respectively (see \cite{SKM}), defined
by%
\begin{equation}
D_{s+}^{\alpha }y_{u}^{1-\frac{1}{\gamma }}=\frac{1}{\Gamma (1-\alpha )}%
\left( \frac{y_{u}^{1-\frac{1}{\gamma }}}{(u-s)^{\alpha }}+\alpha
\int_{s}^{u}\frac{y_{u}^{1-\frac{1}{\gamma }}-y_{r}^{1-\frac{1}{\gamma }}}{%
(u-r)^{\alpha +1}}dr\right) ,  \label{f1}
\end{equation}%
and%
\begin{equation}
D_{t-}^{1-\alpha }\varphi _{t-}(u)=\frac{(-1)^{1-\alpha }}{\Gamma (\alpha )}%
\left( \frac{\varphi (u)-\varphi (t)}{(t-u)^{1-\alpha }}+(1-\alpha
)\int_{u}^{t}\frac{\varphi (u)-\varphi (r)}{(r-u)^{2-\alpha }}dr\right) .
\label{f2}
\end{equation}
From (\ref{f1}), and using the H\"{o}lder continuity of $y$ we obtain
\begin{eqnarray}
|D_{s+}^{\alpha }y_{u}^{1-\frac{1}{\gamma }}| &\leq &C\left( \Vert y\Vert
_{s,t,\infty }^{1-\frac{1}{\gamma }}(u-s)^{-\alpha }+\int_{s}^{u}\frac{%
|y_{u}^{1-\frac{1}{\gamma }}-y_{r}^{1-\frac{1}{\gamma }}|}{(u-r)^{\alpha +1}}%
dr\right)  \notag \\
&\leq &C\left( \Vert y\Vert _{s,t,\infty }^{1-\frac{1}{\gamma }%
}(u-s)^{-\alpha }+\int_{s}^{u}\frac{|y_{u}-y_{r}|^{1-\frac{1}{\gamma }}}{%
(u-r)^{\alpha +1}}dr\right)  \notag \\
&\leq &C\left( \Vert y\Vert _{s,t,\infty }^{1-\frac{1}{\gamma }%
}(u-s)^{-\alpha }+\Vert y\Vert _{s,t,\beta }^{1-\frac{1}{\gamma }%
}\int_{s}^{u}(u-r)^{\beta (1-\frac{1}{\gamma })-\alpha -1}dr\right)  \notag
\\
&\leq &C\left( \Vert y\Vert _{s,t,\infty }^{1-\frac{1}{\gamma }%
}(u-s)^{-\alpha }+\Vert y\Vert _{s,t,\beta }^{1-\frac{1}{\gamma }%
}(u-s)^{\beta (1-\frac{1}{\gamma })-\alpha }\right) ,  \label{e.2.5}
\end{eqnarray}%
where and in what follows, $C$ denotes a generic constant depending on $%
\alpha $, $\beta $ and $T$. On the other hand, \ from (\ref{f2}) we have
\begin{equation}
|D_{t-}^{1-\alpha }\varphi _{t-}(u)|\leq C\Vert \varphi \Vert _{0,T,\beta
}(t-u)^{\alpha +\beta -1}.  \label{e.2.6}
\end{equation}%
Substituting (\ref{e.2.5}) and (\ref{e.2.6}) into (\ref{f3}) yields
\begin{eqnarray}
\left| \int_{s}^{t}y_{u}^{1-\frac{1}{\gamma }}d\varphi (u)\right| &\leq
&C\int_{s}^{t}\left( \Vert y\Vert _{s,t,\infty }^{1-\frac{1}{\gamma }%
}(u-s)^{-\alpha }+\Vert y\Vert _{s,t,\beta }^{1-\frac{1}{\gamma }%
}(u-s)^{\beta (1-\frac{1}{\gamma })-\alpha }\right)  \notag \\
&&\times \Vert \varphi \Vert _{0,T,\beta }(t-u)^{\alpha +\beta -1}du  \notag
\\
&\leq &C\Vert \varphi \Vert _{0,T,\beta }  \notag \\
&&\times \left( \Vert y\Vert _{s,t,\infty }^{1-\frac{1}{\gamma }%
}(t-s)^{\beta }+\Vert y\Vert _{s,t,\beta }^{1-\frac{1}{\gamma }}(t-s)^{\beta
(2-\frac{1}{\gamma })}\right) \,.  \label{f4}
\end{eqnarray}%
Substituting (\ref{f4}) and (\ref{e.2.4}) into (\ref{f5}) we obtain
\begin{eqnarray*}
|y_{t}-y_{s}| &\leq &K_{T}\gamma \left[
\Vert y\Vert _{s,t,\infty }^{1-%
\frac{2}{\gamma }}+\Vert y\Vert _{s,t,\infty }^{1-%
\frac{1}{\gamma }}\right](t-s)+C\gamma \Vert \varphi \Vert _{0,T,\beta } \\
&&\times \left( \Vert y\Vert _{s,t,\infty }^{1-\frac{1}{\gamma }%
}(t-s)^{\beta }+\Vert y\Vert _{s,t,\beta }^{1-\frac{1}{\gamma }}(t-s)^{\beta
(2-\frac{1}{\gamma })}\right) .
\end{eqnarray*}%
Consequently, using the estimate $x^{1-\frac{1}{\gamma }}\leq 1+x$ for all $%
x>0$, we obtain
\begin{eqnarray*}
\Vert y\Vert _{s,t,\beta } &\leq &K_{T}\gamma \left[
\Vert y\Vert _{s,t,\infty }^{1-%
\frac{2}{\gamma }}+\Vert y\Vert _{s,t,\infty }^{1-%
\frac{1}{\gamma }}\right](t-s)^{1-\beta }+C\gamma \Vert \varphi \Vert _{0,T,\beta }
\\
&&\times \left( \Vert y\Vert _{s,t,\infty }^{1-\frac{1}{\gamma }}+(1+\Vert
y\Vert _{s,t,\beta })(t-s)^{\beta (1-\frac{1}{\gamma })}\right) \,,
\end{eqnarray*}%
which implies
\begin{eqnarray*}
&& \left[ 1-C\gamma \Vert \varphi \Vert _{0,T,\beta }(t-s)^{\beta (1-\frac{1}{%
\gamma })}\right] \Vert y\Vert _{s,t,\beta } 
 \leq  K_{T}\gamma \left[
\Vert y\Vert _{s,t,\infty }^{1-%
\frac{2}{\gamma }}+\Vert y\Vert _{s,t,\infty }^{1-%
\frac{1}{\gamma }}\right] \\
&&  \qquad \qquad \times (t-s)^{1-\beta }  
  +C\gamma \Vert \varphi \Vert _{0,T,\beta }\left( \Vert
y\Vert _{s,t,\infty }^{1-\frac{1}{\gamma }}+(t-s)^{\beta (1-\frac{1}{\gamma }%
)}\right) \,.
\end{eqnarray*}%
Suppose that $\Delta $ satisfies
\begin{equation}
\Delta \leq \left( \frac{1}{2C\gamma \Vert \varphi \Vert _{0,T,\beta }}%
\right) ^{\frac{\gamma }{\beta (\gamma -1)}}.  \label{d}
\end{equation}%
Then for all $s,t\in \lbrack 0,T]$, $s\leq t$, such that $t-s\leq \Delta $,
we have
\begin{equation*}
\Vert y\Vert _{s,t,\beta }\leq 2K_{T}\gamma \left[
\Vert y\Vert _{s,t,\infty }^{1-%
\frac{2}{\gamma }}+\Vert y\Vert _{s,t,\infty }^{1-%
\frac{1}{\gamma }}\right](t-s)^{1-\beta }+2C\gamma \Vert \varphi \Vert _{0,T,\beta
}\Vert y\Vert _{s,t,\infty }^{1-\frac{1}{\gamma }}+1\,,
\end{equation*}%
and this implies
\begin{eqnarray}
\Vert y\Vert _{s,t,\infty } &\leq &|y_{s}|+\Vert y\Vert _{s,t,\beta
}(t-s)^{\beta }  \notag \\
&\leq &|y_{s}|+2K_T\gamma \left[
\Vert y\Vert _{s,t,\infty }^{1-%
\frac{2}{\gamma }}+\Vert y\Vert _{s,t,\infty }^{1-%
\frac{1}{\gamma }}\right]
(t-s)\notag\\
&&+2C\gamma \Vert \varphi \Vert _{0,T,\beta }\Vert y\Vert _{s,t,\infty
}^{1-\frac{1}{\gamma }}(t-s)^{\beta }+(t-s)^{\beta }.  \notag
\end{eqnarray}%
Using again the inequality $x^{\alpha }\le 1+x$ for all $x>0$ and ${\alpha }\in
(0,1)$, we have
\begin{eqnarray*}
\Vert y\Vert _{s,t,\infty } &\leq &|y_{s}|+4K_{T}\gamma \left( 1+\Vert
y\Vert _{s,t,\infty }\right) (t-s) \\
&&\quad +2C\gamma \Vert \varphi \Vert _{0,T,\beta }\left( 1+\Vert y\Vert
_{s,t,\infty }\right) (t-s)^{\beta }+(t-s)^{\beta },
\end{eqnarray*}%
which can be written as
\begin{eqnarray}
&&\Vert y\Vert _{s,t,\infty }\left( 1-2C\gamma \Vert \varphi \Vert
_{0,T,\beta }(t-s)^{\beta }-4K_{T}\gamma (t-s)\right)  \notag \\
&\leq &|y_{s}|+4K_{T}\gamma (t-s)+2(t-s)^{\beta }.  \label{f6}
\end{eqnarray}%
Now we choose $\Delta $ such that
\begin{equation}
\Delta =\left( \frac{1}{2C\gamma \Vert \varphi \Vert _{0,T,\beta }}\right) ^{%
\frac{\gamma }{\beta (\gamma -1)}}\wedge \left( \frac{1}{16K_{T}\gamma }%
\right) \wedge \left( \frac{1}{8C\gamma \Vert \varphi \Vert _{0,T,\beta }}%
\right) ^{\frac{1}{\beta }}\,.  \label{e}
\end{equation}%
Then, for all $s,t\in \lbrack 0,T]$, $s<t$, such that $t-s\leq \Delta $, the
inequality (\ref{f6}) implies
\begin{equation}
\Vert y\Vert _{s,t,\infty }\leq 2|y_{s}|+C_{{\gamma },{\beta },T}\,,
\label{e.2.13}
\end{equation}%
where $C_{{\gamma },{\beta },T}=8K_{T}{\gamma }T+4T^{\beta }$. Take $n=[%
\frac{T}{\Delta }]+1$ (where $[a]$ denotes the largest integer bounded by $a$%
). Divide the interval $[0,T]$ into $n$ subintervals. Applying the
inequality (\ref{e.2.13}) for $s=0$ and $t=\Delta $, we have for all $t\in
\lbrack 0,\Delta ]$
\begin{equation}
\Vert y\Vert _{0,t,\infty }\leq 2|y_{0}|+C_{{\gamma },{\beta },T}\,\ \,.
\label{e.2.14}
\end{equation}%
Applying the inequality (\ref{e.2.14}) on the intervals $[\Delta ,2\Delta
], \dots,[(n-2)\Delta ,(n-1)\Delta ],[(n-1)\Delta ,T]$ recursively, we obtain
\begin{eqnarray*}
\Vert y\Vert _{0,T,\infty } &\leq &2^{n}|y_{0}|+2^{n-1}C_{{\gamma },{\beta }%
,T}+\cdots +C_{{\gamma },{\beta },T} \\
&\leq &2^{\left[ \frac{T}{{\Delta }}\right] +1}(|y_{0}|+C_{{\gamma },{\beta }%
,T}) \\
&\leq &2^{T\left( 2C\gamma \Vert \varphi \Vert _{0,T,\beta }\right) ^{\frac{%
\gamma }{\beta (\gamma -1)}}\vee \left( 16K_T\gamma \right) \vee \left(
8C\gamma \Vert \varphi \Vert _{0,T,\beta }\right) ^{\frac{1}{\beta }%
}+1}(|y_{0}|+C_{{\gamma },{\beta },T})\,.
\end{eqnarray*}%
Therefore, we obtain
\begin{equation*}
\Vert x\Vert _{0,T,\infty }\leq C_{1,{\gamma },{\beta },T}(|x_{0}|+1)\exp
\left\{ C_{2,{\gamma },{\beta },T}\left( 1+\Vert \varphi \Vert _{0,T,\beta
}^{\frac{\gamma }{\beta (\gamma -1)}}\right) \right\} ,
\end{equation*}%
which concludes the proof of the theorem.
\end{proof}

\setcounter{equation}{0}

\section{Singular equations driven by fBm}

Let $(B_{t}^{H},t\geq 0)$ be a fractional Brownian motion with Hurst
parameter $H\in (1/2,1)$, defined in a complete probability space $(\Omega ,%
\mathcal{F},P)$. Namely, $(B_{t}^{H},t\geq 0)$ is a mean zero Gaussian
process with covariance
\begin{equation}
\mathbb{E}(B_{t}^{H}B_{s}^{H})=R_{H}(s,t)=\frac{1}{2}\left(
t^{2H}+s^{2H}-|t-s|^{2H}\right) \,.  \label{cov}
\end{equation}%
We are interested in the following singular stochastic differential equation
\begin{equation}
X_{t}=x_{0}+\int_{0}^{t}f(s,X_{s})ds+B_{t}^{H}\,,  \label{a.2}
\end{equation}%
where $x_{0}>0$, and the function $f(s,x)$ has a singularity at $x=0$ and
satisfies the assumptions (i) to (iii). As an immediate consequence of \
Theorem \ref{t1} we have the following result.

\begin{theorem}
\label{t2}Under the assumptions (i)-(iii), there is a unique pathwise solution $%
X=(X_{t},t\geq 0)$ to Equation (\ref{a.2}), such that $X_{t}>0$ t almost
surely on $[0,\infty )$ and for any $T>0$, $\Vert X\Vert _{0,T,\infty }\in
L^{p}(\Omega )$, for all $p>0$.
\end{theorem}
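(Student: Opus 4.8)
The plan is to solve equation (\ref{a.2}) pathwise and then transfer the deterministic bounds of Section 2 to the stochastic setting by means of Fernique's theorem. First I would fix an exponent $\beta$ with $\tfrac12<\beta<H$, chosen close enough to $H$ that the condition $\alpha>\tfrac1\beta-1$ of assumption (ii) still holds; this is possible because $\alpha>\tfrac1H-1$ and $\tfrac1\beta-1\downarrow\tfrac1H-1$ as $\beta\uparrow H$. It is classical that $B^H$ admits a modification with sample paths in $C^\beta([0,T])$ for every $T>0$, and $B^H_0=0$, so for almost every $\omega$ the function $\varphi=\varphi^\omega:=B^H_\cdot(\omega)$ satisfies all the hypotheses imposed on the driving path in Section 2. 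Applying the deterministic existence-and-uniqueness theorem of Section 2 pathwise then gives, for a.e.\ $\omega$, a unique continuous $X_\cdot(\omega)$ solving (\ref{a.2}) with $X_t(\omega)>0$ for all $t\ge0$, and by the remark following that theorem $X_\cdot(\omega)\in C^\beta([0,T])$ for every $T$. Measurability of $\omega\mapsto X_\cdot(\omega)$, so that $X$ is a genuine stochastic process, is routine: up to the first time $X$ drops below a level $1/n$ the drift is continuous and Lipschitz in $x$, so classical ODE theory gives measurable dependence on $\omega$, and one lets $n\to\infty$ (using that on each compact interval $X$ is continuous and strictly positive). This settles existence, positivity and pathwise uniqueness.

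For the integrability I would invoke Theorem \ref{t1}, which gives, for every $\gamma>2$ and every $T>0$,
\begin{equation*}
\Vert X\Vert_{0,T,\infty}\le C_{1,\gamma,\beta,T}\bigl(|x_0|+1\bigr)\exp\Bigl\{C_{2,\gamma,\beta,T}\bigl(1+\Vert B^H\Vert_{0,T,\beta}^{\theta(\gamma)}\bigr)\Bigr\},\qquad \theta(\gamma):=\tfrac{\gamma}{\beta(\gamma-1)}=\tfrac1\beta\Bigl(1+\tfrac1{\gamma-1}\Bigr).
\end{equation*}
Since $\beta>\tfrac12$ we have $\tfrac1\beta<2$, so $\theta(\gamma)\downarrow\tfrac1\beta<2$ as $\gamma\to\infty$; I would fix $\gamma$ large enough that $\theta(\gamma)<2$. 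The key analytic input is that the $\beta$-H\"older norm of the fBm has a square-exponential tail: $\mathbb{E}\exp\bigl(\lambda\Vert B^H\Vert_{0,T,\beta}^2\bigr)<\infty$ for some $\lambda>0$. This follows from Fernique's theorem, viewing $B^H$ as a centered Gaussian random variable with values in the separable Banach space obtained as the closure of the smooth functions in $C^\beta([0,T])$ (the fBm paths lie there since they belong to $C^{\beta'}$ for some $\beta<\beta'<H$); alternatively it follows directly from the Garsia--Rodemich--Rumsey inequality, which bounds $\Vert B^H\Vert_{0,T,\beta}$ by a multiple of $\bigl(\int_0^T\!\int_0^T|B^H_u-B^H_v|^{2q}|u-v|^{-2q\beta-2}\,du\,dv\bigr)^{1/(2q)}$, together with the fact that $B^H_u-B^H_v$ is centered Gaussian with variance $|u-v|^{2H}$. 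Granting this, for any $c>0$ the elementary inequality $c\,r^{\theta(\gamma)}\le\lambda r^2+C_{c,\lambda,\gamma}$ for $r\ge0$ yields $\mathbb{E}\exp\bigl(c\Vert B^H\Vert_{0,T,\beta}^{\theta(\gamma)}\bigr)<\infty$; raising the displayed estimate to the power $p$ and taking expectations then gives $\Vert X\Vert_{0,T,\infty}\in L^p(\Omega)$ for every $p>0$.

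The main obstacle is precisely this last step: one must check that the exponent $\theta(\gamma)$ produced by Theorem \ref{t1} can be pushed strictly below $2$ --- which is why $\gamma$ has to be taken large, and is exactly where $\beta>\tfrac12$ (equivalently $H>\tfrac12$) is used --- and one must know that the H\"older norm of a Gaussian process genuinely has exponential-square moments. Everything else is a routine pathwise transcription of the deterministic results of Section 2.
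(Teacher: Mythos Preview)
Your proposal is correct and follows essentially the same route as the paper's proof: apply the deterministic existence, uniqueness and supremum estimate of Section~2 pathwise, choose $\gamma$ large enough that $\theta(\gamma)=\tfrac{\gamma}{\beta(\gamma-1)}<2$ (the paper phrases this as $\gamma>\tfrac{2\beta}{2\beta-1}$), and then invoke Fernique's theorem to control $\mathbb{E}\exp\bigl(c\|B^H\|_{0,T,\beta}^{\theta(\gamma)}\bigr)$. The paper is more terse---it omits your remarks on measurability, on choosing $\beta$ close enough to $H$ so that $\alpha>\tfrac1\beta-1$, and on passing from ``some $\lambda$'' in Fernique to ``all $c$''---but the argument is the same.
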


\begin{proof}
Fix $\beta \in (\frac{1}{2},H)$ and $T>0$. Applying Theorem \ref{t1}, we
obtain that there is a unique pathwise solution $X=(X_{t},t\geq 0)$ to
Equation (\ref{a.2}), such that $X_{t}>0$ almost surely on $[0,\infty )$ and
\begin{equation}
\Vert X\Vert _{0,T,\infty }\leq C_{1,{\gamma },{\beta },T}(|x_{0}|+1)\exp
\left\{ C_{2,{\gamma },{\beta },T}\left( 1+\Vert B^{H}\Vert _{0,T,\beta }^{%
\frac{\gamma }{\beta (\gamma -1)}}\right) \right\} \,.  \label{e.4.3}
\end{equation}%
If we choose ${\gamma }$ such that $\gamma >\frac{2\beta }{2\beta -1}$, then
$\frac{\gamma }{\beta (\gamma -1)}<2$, and by Fernique's theorem (see \cite%
{Fe}), we obtain
\begin{equation}
\mathbb{E}(e^{C\Vert B^{H}\Vert _{0,T,\beta }^{\frac{\gamma }{\beta (\gamma
-1)}}})<\infty ,  \label{e.4.2}
\end{equation}%
for all $C>0$, which implies that $\mathbb{E}(\Vert X\Vert _{0,T,\infty
}^{p})<\infty $ for all $p\geq 1$.
\end{proof}

Theorem \ref{t2} implies the existence of a unique solution to the following
stochastic differential equation with non Lipschitz diffusion coefficient:
\begin{equation}
Y_{t}=y_{0}+\int_{0}^{t}f(s,Y_{s})ds+\int_{0}^{t}\sqrt{Y_{s}}dB_{s}^{H},
\label{young}
\end{equation}%
where $y_{0}$ is a positive constant and $f$ is a nonnegative continuous
function satisfying the following conditions:
\begin{description}
\item[(a)] There exists $x_1>0$ such that $f(t,x)\ge g(t)$ for all $t>0$ and $x\in(0,x_1)$, where $g$ is a continuous function such that $g(t)>0$ if $t>0$.
\item[(b)] $f(t,x) \ge x \partial _{x}f(t,x)$ for all $t>0$ and $x>0$.
\item[(c)] $f(t,x) \le h(t) (x+1)$ for all $t\ge 0$ and $x>0$, where $h$ is a nonnegative locally bounded function.
\end{description}
The term $\sqrt {Y_s}$ appears in a fractional version of the CIR process in financial
mathematics (see \cite{CIR}) and cannot be treated directly
by the approaches in Lyons \cite{Ly},
Nualart and R\u a\c scanu \cite{NR}, since function $%
g(x)=\sqrt{x}$ does no satisfy the \ usual \
Lipschitz conditions commonly imposed.  We make the change of variables
$X_{t}=2\sqrt{Y_{t}}$. Then, from the
chain rule for the Young integral, it follows that a positive stochastic
process $Y=(Y_{t},t\geq 0)$ satisfies (\ref{young}) if and  only if $X_{t}$
satisfies the following equation:
\begin{equation}
X_{t}=2\sqrt{y_{0}}+\int_{0}^{t}\frac{2f(s,X_{s})}{X_{s}}ds+B_{t}^{H}.
\label{aa}
\end{equation}%
Let $f_1(t,x)=2f(t,x)x^{-1}$. Then $f_1(t,x)$ satisfies all assumptions
(i)-(iii), and hence from Theorem \ref{t2}, we know that there exists a
unique positive solution $X_{t}$ to equation (\ref{aa}) with all positive
moments. So $Y_{t}=X_{t}^{2}/4$ is the unique positive solution \ to
Equation (\ref{young}), and it has finite moments of all orders.

\medskip
The next result  states the scaling property of the solution to Equation (%
\ref{a.2}), when the coefficient $f(s,x)$ satisfies some homogeneity
condition on the variable $x$.

\begin{proposition}
(Scaling Property) \ We denote by $Eq(x_{0},f)$ the equation (\ref{a.2}).
Suppose that $x_{0}>0$, and $f(t,x)$ satisfies assumptions (i)-(iii), and $%
f(t,x)$ is homogeneous, that is, $f(st,yx)=s^{m}y^{n}f(t,x)$ for some
constants $m,n$. Then, the process $\left( a^{H}X_{\frac{t}{a}},t\geq
0\right) $ has the same law as the solution to the Equation $%
Eq(a^{H}x_{0},a^{H-nH-m-1}f)$.
\end{proposition}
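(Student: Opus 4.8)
The plan is to apply the transformation directly to the solution $X$ of $Eq(x_{0},f)$ and to read off the equation satisfied by the rescaled process, using only the homogeneity of $f$ and the self-similarity of fractional Brownian motion. Throughout, $a>0$ so that $a^{H}$ is well defined. Set $\tilde X_{t}=a^{H}X_{t/a}$. Starting from
\begin{equation*}
X_{t/a}=x_{0}+\int_{0}^{t/a}f(s,X_{s})\,ds+B^{H}_{t/a}
\end{equation*}
and multiplying by $a^{H}$, the first step is the change of variables $s=r/a$ in the drift integral, which produces a factor $a^{-1}$ from $ds$; substituting $X_{r/a}=a^{-H}\tilde X_{r}$ and invoking the homogeneity relation $f(st,yx)=s^{m}y^{n}f(t,x)$ with $s=1/a$ and $y=a^{-H}$ gives $f(r/a,a^{-H}\tilde X_{r})=a^{-m-nH}f(r,\tilde X_{r})$. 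Hence the drift term becomes $a^{H-1-m-nH}\int_{0}^{t}f(r,\tilde X_{r})\,dr=a^{H-nH-m-1}\int_{0}^{t}f(r,\tilde X_{r})\,dr$, which is precisely the drift of $Eq(a^{H}x_{0},a^{H-nH-m-1}f)$.

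Next I would handle the noise term. Writing $\hat B_{t}:=a^{H}B^{H}_{t/a}$, the self-similarity of fractional Brownian motion (the identity $\{B^{H}_{ct}\}\overset{d}{=}\{c^{H}B^{H}_{t}\}$ applied with $c=1/a$) shows that $\hat B$ is again a fractional Brownian motion with Hurst parameter $H$, so $(\hat B_{t},t\ge0)\overset{d}{=}(B^{H}_{t},t\ge0)$. Collecting the three terms, $\tilde X$ satisfies, pathwise,
\begin{equation*}
\tilde X_{t}=a^{H}x_{0}+a^{H-nH-m-1}\int_{0}^{t}f(r,\tilde X_{r})\,dr+\hat B_{t},
\end{equation*}
i.e. it is the solution of $Eq(a^{H}x_{0},a^{H-nH-m-1}f)$ with $B^{H}$ replaced by $\hat B$.

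To conclude I would observe that the coefficient $a^{H-nH-m-1}f$ still satisfies assumptions (i)--(iii): multiplication by the positive constant $a^{H-nH-m-1}$ preserves nonnegativity, continuity, the sign of $\partial_{x}$, and the two-sided bounds in (ii)--(iii) (with $g$ and $h$ replaced by positive multiples of themselves). Thus Theorem \ref{t2} applies to $Eq(a^{H}x_{0},a^{H-nH-m-1}f)$, and its solution is obtained as a fixed deterministic functional $\Phi$ of the driving fractional Brownian motion (measurable, indeed continuous on $\beta$-H\"older paths), with pathwise uniqueness guaranteeing $\tilde X=\Phi(\hat B)$ almost surely. Since $\hat B\overset{d}{=}B^{H}$, it follows that $\tilde X=\Phi(\hat B)\overset{d}{=}\Phi(B^{H})$, and the latter is by definition the solution to $Eq(a^{H}x_{0},a^{H-nH-m-1}f)$; this is the asserted identity in law.

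There is no serious obstacle here. The only points deserving care are the bookkeeping of exponents in the change of variables, and making explicit that equality in law of the driving noise transfers to equality in law of the solution --- which is exactly where the pathwise uniqueness established in Theorem \ref{t2} is invoked.
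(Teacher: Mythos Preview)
Your argument is correct and follows essentially the same route as the paper: use the homogeneity of $f$ together with the self-similarity $\{a^{-H}B^{H}_{at}\}\overset{d}{=}\{B^{H}_{t}\}$ of fractional Brownian motion, then read off the equation for the rescaled process after a change of variables in the drift integral. The only difference is cosmetic: the paper first replaces $B^{H}$ by the equal-in-law process $a^{-H}B^{H}_{a\cdot}$ (calling the resulting solution $X_{a,\cdot}$) and then rescales, whereas you rescale first and then invoke self-similarity for $\hat B$; you are also more explicit than the paper in justifying why equality in law of the noise transfers to the solution via the pathwise uniqueness of Theorem~\ref{t2}.
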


\begin{proof}
For each $a>0$, we know that $\{a^{-H}B_{at}^{H},t\geq 0\}$ is a fractional
Brownian motion. We denote $X_{a,t}$ the solution to the following equation:
\begin{equation*}
X_{a,t}=x_{0}+\int_{0}^{t}f(s,X_{a,s})ds+a^{-H}B_{at}^{H}.
\end{equation*}%
So $\left( X_{t},t\geq 0\right) $ (the solution to $Eq(x_{0},f)$) has the
same distribution as $\left( X_{a,t},t\geq 0\right) $. Then%
\begin{eqnarray*}
a^{H}X_{a,\frac{t}{a}} &=&a^{H}x_{0}+\int_{0}^{\frac{t}{a}%
}a^{H}f(s,X_{a,s})ds+\ B_{t}^{H} \\
&=&a^{H}x_{0}+\int_{0}^{t}a^{H-1-m-nH}f(r,a^{H}X_{a,\frac{r}{a}})dr+\
B_{t}^{H},
\end{eqnarray*}%
which implies the result.
\end{proof}

As an example, we can consider the function $f(t,x)=s^{\gamma }x^{-\alpha }$%
, where $\alpha >\frac{1}{H}-1$, and $\gamma >0$. \ Then, if $(X_{t},t\geq 0)
$ is the solution to Equation%
\begin{equation*}
X_{t}=x_{0}+\int_{0}^{t}s^{\gamma }X_{s}^{-\alpha }ds+B_{t}^{H}
\end{equation*}
(\ref{a.2}), then\ $\left( a^{H}X_{\frac{t}{a}},t\geq 0\right) $ has the
same law as the solution to the Equation $\ $%
\begin{equation*}
X_{t}=a^{H}x_{0}+a^{H-\alpha H-\gamma -1}\int_{0}^{t}s^{\gamma
}X_{s}^{-\alpha }ds+B_{t}^{H}.
\end{equation*}

\subsection{Absolute continuity of the law of the solution}

In this subsection we will  apply the Malliavin calculus to the solution to
Equation    (\ref{a.2}) in order to study the absolute continuity of the law
of the solution at a fixed time $t>0$. \ We will first make some
preliminaries on the Malliavin calculus for the fractional Brownian motion,
and we refer to Decreusefond and \"{U}st\"{u}nel \cite{DU}, Nualart \cite{nualart}
and Saussereau and Nualart \cite{SN} for a more complete treatment of this
topic.

Fix a time interval $[0,T]$. Denote by $\mathcal{E}$ the set of real valued
step functions on   $[0,T]$ and let $\mathcal{H}$ be the Hilbert space
defined as the closure of $\mathcal{E}$ with respect to the scalar product $%
\langle \mathbf{1}_{[0,t]},\mathbf{1}_{[0,s]}\rangle _{\mathcal{H}%
}=R_{H}(t,s)$, where $R_{H}$ is the covariance function of the fBm, given in
(\ref{cov}). We know that
\begin{eqnarray}
R_{H}(t,s) &=&\alpha _{H}\int_{0}^{t}\int_{0}^{s}|r-u|^{2H}dudr  \notag \\
&=&\int_{0}^{t\wedge s}K_{H}(t,r)K_{H}(s,r)dr,  \notag
\end{eqnarray}%
where $K_{H}(t,s)=c_{H}s^{\frac{1}{2}-H}\int_{s}^{t}(u-s)^{H-\frac{3}{2}%
}u^{H-\frac{1}{2}}du\mathbf{1}_{\{s<t\}}$ with $c_{H}=\sqrt{\frac{H(2H-1)}{%
B(2-2H,H-\frac{1}{2})}}$ and $B$ denotes the Beta function, and $\alpha
_{H}=H(2H-1)$.  In general, for any   $\varphi ,\psi \in \mathcal{E}$ we
have
\begin{equation*}
\langle \varphi ,\psi \rangle _{\mathcal{H}}=\alpha
_{H}\int_{0}^{T}\int_{0}^{T}|r-u|^{2H-2}\varphi _{r}\psi _{u}dudr\,\,.
\end{equation*}%
The mapping $\mathbf{1}_{[0,t]}$$\longmapsto B_{t}^{H}$ can be extended to
an isometry between $\mathcal{H}$ and the Gaussian space $\mathcal{H}_{1}$
spanned by $B^{H}$. We denote this isometry by $\varphi \longmapsto
B^{H}(\varphi )$.

We consider the operator $K_{H}^{\ast }:\mathcal{E}\rightarrow L^{2}(0,T)$
defined by $\ $
\begin{equation}
\left( K_{H}^{\ast }\varphi \right) (s)=\int_{s}^{T}\varphi (t)\frac{%
\partial K_{H}}{\partial t}(t,s)dt.  \label{h1}
\end{equation}
Notice that $(K_{H}^{\ast }(\mathbf{1}_{[0,t]}))(s)=K_{H}(t,s)\mathbf{1}%
_{[0,t]}(s)$. For any $\varphi ,\psi \in \mathcal{E}$ we have%
\begin{equation}
\langle \varphi ,\psi \rangle _{\mathcal{H}}=\langle K_{H}^{\ast }\varphi
,K_{H}^{\ast }\phi \rangle _{L^{2}(0,T)}=\mathbb{E}(B^{H}(\varphi
)B^{H}(\phi )),  \label{h2}
\end{equation}
and   $K_{H}^{\ast }$ provides an isometry between the Hilbert space $%
\mathcal{H}$ and a closed subspace of $L^{2}(\left[ 0,T\right] )$. We denote
$K_{H}:L^{2}(\left[ 0,T\right] )\rightarrow \mathcal{H}_{H}:=K_{H}(L^{2}(%
\left[ 0,T\right] ))$ the operator defined by $(K_{H}h)(t):=%
\int_{0}^{t}K_{H}(t,s)h(s)ds$. The space $\mathcal{H}_{H}$ is the fractional
version of the Cameron-Martin space. Finally, we denote by $R_{H}=K_{H}\circ
K_{H}^{\ast }:\mathcal{H}\rightarrow \mathcal{H}_{H}$ the operator $%
R_{H}\varphi =\int_{0}^{\cdot }K_{H}(\cdot ,s)(K_{H}^{\ast }\varphi )(s)ds$.
We remark that for any $\varphi \in \mathcal{H}$, $R_{H}\varphi $ is
H\"{o}lder continuous of order $H$.

\medskip
If we assume that $\Omega $ is the canonical probability space $C_{0}([0,T])$,
equipped with the Borel $\sigma $-field and the probability $P$ is the law
of the fBm. Then, the injection $R_{H}:\mathcal{H}\rightarrow $$\Omega $
embeds $\mathcal{H}$ densely into $\Omega $ and $(\Omega ,\mathcal{H},$ $P)$
is an abstract Wiener space in the sense of Gross.
In the sequel we will make this assumption  on the underlying probability space.

Let $\mathcal{S}$ be the space of   smooth and cylindrical random
variables of the form%
\begin{equation}
F=f(B^{H}(\varphi _{1}),\ldots ,B^{H}(\varphi _{n})),  \label{g1}
\end{equation}%
where $f\in C_{b}^{\infty }(\mathbb{R}^{n})$ ($f$ and all its partial
derivatives are bounded). For a random variable $F$ of the form \ (\ref{g1})
we define its Malliavin derivative as the $\mathcal{H}$-valued random variable%
\begin{equation*}
DF=\sum_{i=1}^{n}\frac{\partial f}{\partial x_{i}}(B^{H}(\varphi
_{1}),\ldots ,B^{H}(\varphi _{n}))\varphi _{i}\text{. }
\end{equation*}%
We denote by $\mathbb{D}^{1,2}$ the Sobolev space defined as the completion
of the class $\mathcal{S}$, with respect to the norm%
\begin{equation*}
\left\| F\right\| _{1,2}=\left[ \mathbb{E}(F^{2})+\mathbb{E}\left( \left\|
DF\right\| _{\mathcal{H}}^{2}\right) \right] ^{1/2}.
\end{equation*}%
The basic criterion for the existence of densities (see Bouleau and Hirsch %
\cite{BH}), says that if \ $F\in \mathbb{D}^{1,2}$, and \ $\left\|
DF\right\| _{\mathcal{H}}>0$ almost surely, then the law of $F$ has a
density with respect to the Lebesgue measure on the real line. Using this
criterion we can show the following result.

\begin{theorem}
Suppose that $f$ satisfies the assumptions (i)-(iii). Let $X_{t}$ be the
solution to Equation (\ref{a.2}). Then for any $t\geq 0$, $X_{t}\in \mathbb{D%
}^{1,2}$. Furthermore, \ for any $t>0$ the law of $X_{t}$ is absolutely
continuous with respect to the Lebesgue measure on $\mathbb{R}$.
\end{theorem}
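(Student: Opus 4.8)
We verify the two hypotheses of the Bouleau--Hirsch criterion recalled above: $X_{t}\in\mathbb{D}^{1,2}$, and $\|DX_{t}\|_{\mathcal{H}}>0$ almost surely for $t>0$. The idea behind both is the following heuristic: applying $D_{r}$ to (\ref{a.2}) and using $D_{r}B_{t}^{H}=\mathbf{1}_{[0,t]}(r)$, the derivative should solve
\begin{equation*}
D_{r}X_{t}=\mathbf{1}_{[0,t]}(r)+\int_{r}^{t}\partial_{x}f(s,X_{s})\,D_{r}X_{s}\,ds,\qquad 0\le r\le t,
\end{equation*}
so that $D_{r}X_{t}=\exp\!\big(\int_{r}^{t}\partial_{x}f(s,X_{s})\,ds\big)\mathbf{1}_{[0,t]}(r)$. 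Since $\partial_{x}f\le0$ by assumption (i), this forces $0<D_{r}X_{t}\le\mathbf{1}_{[0,t]}(r)$; because the kernel $|r-u|^{2H-2}$ defining $\langle\cdot,\cdot\rangle_{\mathcal{H}}$ is nonnegative, the upper bound gives $\|DX_{t}\|_{\mathcal{H}}\le\|\mathbf{1}_{[0,t]}\|_{\mathcal{H}}=t^{H}<\infty$, while the strict positivity will be used for the lower bound.

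The point requiring work is that $f$ is singular at $0$ and $\partial_{x}f$ is unbounded near $0$, so neither the linear equation above nor $X_{t}\in\mathbb{D}^{1,2}$ follows from the Malliavin calculus for Lipschitz coefficients. I would use a two--stage truncation. Fix $\beta\in(1/2,H)$ and $T>0$. First put $f_{n}(t,x)=f(t,\phi_{n}(x))$ where $\phi_{n}$ is smooth, nondecreasing, equals $x$ for $x\le n$ and is bounded: then $f_{n}$ satisfies (i) and (ii) and satisfies (iii) with $h$ replaced by $2h$, so the equation (\ref{a.2}) with drift $f_{n}$ has a unique positive solution $X^{n}$ (by the existence and positivity argument of Section 2), and Theorem \ref{t1} bounds $\|X^{n}\|_{0,T,\infty}$ by the right side of (\ref{e2}) with constants independent of $n$; hence $\sup_{n}\mathbb{E}(\|X^{n}\|_{0,T,\infty}^{p})<\infty$ for every $p$, as in the proof of Theorem \ref{t2}. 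On $\{\|X\|_{0,T,\infty}\le n\}$ we have $f_{n}(s,X_{s})=f(s,X_{s})$, so uniqueness gives $X^{n}=X$ there; since that event increases to $\Omega$, $X^{n}_{t}\to X_{t}$ a.s.\ and, by the uniform moments, in $L^{2}(\Omega)$. Second, for fixed $n$, put $g_{n,m}(t,x)=f_{n}(t,x\vee(1/m))$ for $x\in\mathbb{R}$; this is globally Lipschitz in $x$, with $\partial_{x}g_{n,m}$ valued in $[-L_{n,m},0]$. For such drifts and additive fractional noise the Malliavin calculus of Nualart and Saussereau \cite{SN} applies, giving $X^{n,m}_{t}\in\mathbb{D}^{1,2}$ with $D_{r}X^{n,m}_{t}$ solving the corresponding linearized equation, whence again $0<D_{r}X^{n,m}_{t}\le\mathbf{1}_{[0,t]}(r)$ and $\|DX^{n,m}_{t}\|_{\mathcal{H}}\le t^{H}$. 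A comparison argument ($g_{n,m}$ is nondecreasing in $m$ and dominated by $f_{n}$) shows that $X^{n,m}_{t}$ is nondecreasing in $m$ with $X^{n,m}_{t}\le X^{n}_{t}$, and $X^{n,m}=X^{n}$ on $\{\min_{[0,T]}X^{n}\ge 1/m\}$; letting $m\to\infty$ gives $X^{n,m}_{t}\uparrow X^{n}_{t}$ a.s.\ and, being dominated by $X^{n}_{t}-X^{n,1}_{t}\in L^{2}$, in $L^{2}(\Omega)$.

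Now pass to the limit twice via the closedness of $D$: if $F_{k}\to F$ in $L^{2}(\Omega)$ and $\sup_{k}\|DF_{k}\|_{L^{2}(\Omega;\mathcal{H})}<\infty$ then $F\in\mathbb{D}^{1,2}$ and $DF_{k}\rightharpoonup DF$ (see \cite{nualart}). Applied with $k=m$ this gives $X^{n}_{t}\in\mathbb{D}^{1,2}$, and the local property of $D$ together with $X^{n,m}=X^{n}$ on $\{\min_{[0,T]}X^{n}\ge 1/m\}$ and $m\to\infty$ identifies $D_{r}X^{n}_{t}=\exp\!\big(\int_{r}^{t}\partial_{x}f_{n}(s,X^{n}_{s})\,ds\big)\mathbf{1}_{[0,t]}(r)$, so $0<D_{r}X^{n}_{t}\le\mathbf{1}_{[0,t]}(r)$ and $\sup_{n}\|DX^{n}_{t}\|_{L^{2}(\Omega;\mathcal{H})}\le t^{H}$. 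Applied again with $k=n$ and $X^{n}_{t}\to X_{t}$ in $L^{2}(\Omega)$ it gives $X_{t}\in\mathbb{D}^{1,2}$, and the local property together with $X^{n}=X$ on $\{\|X\|_{0,T,\infty}\le n\}$ and $n\to\infty$ yields, almost surely,
\begin{equation*}
D_{r}X_{t}=\exp\!\Big(\int_{r}^{t}\partial_{x}f(s,X_{s})\,ds\Big)\mathbf{1}_{[0,t]}(r),\qquad 0\le r\le t.
\end{equation*}

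Finally fix $t>0$. Almost surely $X$ is continuous and strictly positive on $[0,t]$, so $m_{\ast}:=\min_{[0,t]}X_{s}>0$ and $M_{\ast}:=\max_{[0,t]}X_{s}<\infty$, and $\partial_{x}f$ is bounded, say by $L_{\ast}$, on the compact set $[0,t]\times[m_{\ast},M_{\ast}]$; hence $D_{r}X_{t}\ge e^{-L_{\ast}t}>0$ for $0\le r\le t$, so that
\begin{equation*}
\|DX_{t}\|_{\mathcal{H}}^{2}=\alpha_{H}\!\int_{0}^{T}\!\!\int_{0}^{T}\!|r-u|^{2H-2}D_{r}X_{t}\,D_{u}X_{t}\,du\,dr\ \ge\ e^{-2L_{\ast}t}\,\alpha_{H}\!\int_{0}^{t}\!\!\int_{0}^{t}\!|r-u|^{2H-2}\,du\,dr=e^{-2L_{\ast}t}\,t^{2H}>0 .
\end{equation*}
By the Bouleau--Hirsch criterion the law of $X_{t}$ is absolutely continuous with respect to Lebesgue measure. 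The main obstacle is precisely this rigorous Malliavin differentiability under the singular, non-Lipschitz drift: the truncations must be arranged so that positivity and the uniform moment estimates of Section 2 survive the first stage while the second stage genuinely removes the singularity, and the two successive $L^{2}$-limits together with the local property of $D$ must be combined to pin down the limiting derivative.
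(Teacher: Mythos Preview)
Your proof is correct and reaches the same formula
\[
D_{r}X_{t}=\exp\!\Big(\int_{r}^{t}\partial_{x}f(s,X_{s})\,ds\Big)\mathbf{1}_{[0,t]}(r),
\]
and then concludes by Bouleau--Hirsch, but the route is genuinely different from the paper's. The authors do \emph{not} truncate the drift. Instead they work on the abstract Wiener space $(\Omega,\mathcal{H},P)$, perturb the driving path by $\epsilon R_{H}\varphi$ for $\varphi\in\mathcal{H}$, and observe that the difference $X_{t}^{\epsilon}-X_{t}$ satisfies a linear ODE which can be solved in closed form. The crucial point is that the solution contains the factor $\exp\!\big(\int_{u}^{t}\Theta_{r}\,dr\big)$ with $\Theta_{r}=\partial_{x}f(r,\cdot)\le 0$, so the exponential is uniformly $\le 1$; this gives, for free, the $L^{2}$ bounds needed to take $\epsilon\to 0$ and to invoke Sugita's characterization of $\mathbb{D}^{1,2}$ via ray absolute continuity. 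The identification of $DX_{t}$ and the positivity of $\|DX_{t}\|_{\mathcal{H}}$ then follow immediately.

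What your approach buys is that it stays within the classical ``Lipschitz approximation plus closedness of $D$'' framework and does not require the Sugita criterion or the Cameron--Martin embedding; the two truncations are arranged so that Section~2 still yields positivity and uniform moments at the first stage, while the second stage removes the singularity and admits the standard results of \cite{SN}. What the paper's approach buys is brevity: the sign condition $\partial_{x}f\le 0$ is used once, to bound the exponential, and no comparison argument, no monotone limit, and no local property of $D$ is needed. Both arguments ultimately exploit the same structural fact---$\partial_{x}f\le 0$ forces $0<D_{r}X_{t}\le \mathbf{1}_{[0,t]}(r)$---but the paper extracts it before proving differentiability, whereas you extract it after.
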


\begin{proof}
Fix a time interval $[0,T]$, and let $\beta \in (\frac{1}{2},H)$.  We want
to \ compute the directional derivative $\left\langle DX_{t},\varphi
\right\rangle _{\mathcal{H}}$, for some $\varphi \in \mathcal{H}$. The
function  $h=R_{H}\varphi $ belongs to $C^{\beta }([0,T])$ and   $h_{0}=0$.
Taking into account the embedding given by $R_{H}:\mathcal{H}\rightarrow $$%
\Omega $ mentioned before, we will have%
\begin{equation}
\left\langle DX_{t},\varphi \right\rangle _{\mathcal{H}}=\frac{%
dX_{t}^{\epsilon }}{d\epsilon }|_{\epsilon =0},  \label{h0}
\end{equation}%
where $X_{t}^{\epsilon }$ is the solution to   the following equation
\begin{equation}
X_{t}^{\epsilon }=x_{0}+\int_{0}^{t}f(s,X_{s}^{\epsilon
})ds+B_{t}^{H}+\epsilon h_{t},  \label{h}
\end{equation}%
$t\in \lbrack 0,T]$, where $\epsilon $ $\in \lbrack 0,1]$.

From Theorem \ref{t2}, \ it follows that there is a constant $C$ independent
of ${\varepsilon }$ such that
\begin{equation*}
\mathbb{E}\left( \sup_{0\leq t\leq T}|X^{\epsilon }|_{t}^{p}\right) \leq
C<\infty \,,
\end{equation*}%
for all $p\geq 1$.  From equations (\ref{a.2}) and (\ref{h}), we
deduce
\begin{equation}
X_{t}^{\epsilon }-X_{t}=\int_{0}^{t}(f(s,X_{s}^{\epsilon
})-f(s,X_{s}))ds+\epsilon h_{t}.  \label{bb}
\end{equation}%
By using Taylor expansion, the equation (\ref{bb}) becomes:
\begin{equation}
X_{t}^{\epsilon }-X_{t}=\int_{0}^{t}\Theta _{s}(X_{s}^{\epsilon
}-X_{s})ds+\epsilon h_{t}\,,  \label{e.4.4}
\end{equation}%
where $\Theta _{s}=\partial _{x}f(s,X_{s}+\theta _{s}(X_{s}^{\epsilon
}-X_{s}))$ for some $\theta _{s}^{\epsilon }$ between $0$ and $1$.
By using   (\ref{h1})   the
solution to equation (\ref{e.4.4}) is given by
\begin{eqnarray}
X_{t}^{\epsilon }-X_{t} &=&\epsilon \int_{0}^{t}\exp \left(
\int_{s}^{t}\Theta _{r}dr\right) d(R_{H}\varphi )(s)  \notag \\
&=&\epsilon \int_{0}^{t}\exp \left( \int_{s}^{t}\Theta _{r}dr\right) \left(
\int_{0}^{s}\frac{\partial K_{H}(s,u)}{\partial s}(K_{H}^{\ast }\varphi
)(u)du\right) ds  \notag \\
&=&\epsilon \int_{0}^{t}\left( \int_{u}^{t}\exp \left( \int_{s}^{t}\Theta
_{r}dr\right) \frac{\partial K_{H}(s,u)}{\partial s}ds\right) (K_{H}^{\ast
}\varphi )(u)du.  \notag
\end{eqnarray}%
Using (\ref{h1}) and (\ref{h2}) we can write%
\begin{eqnarray*}
X_{t}^{\epsilon }-X_{t} &=&\epsilon \int_{0}^{t}(K_{H}^{\ast }\varphi
)(u)\left( K_{H}^{\ast }\left( \exp \left( \int_{\cdot }^{t}\Theta
_{r}dr\right) \right) \right) (u)du \\
&=&\epsilon \left\langle \varphi ,\exp \left( \int_{\cdot }^{t}\Theta
_{r}dr\right) \right\rangle _{\mathcal{H}} \\
&=&\epsilon \alpha _{H}\int_{0}^{t}\int_{0}^{t}\varphi (s)\exp \left(
\int_{u}^{t}\Theta _{r}dr\right) |s-u|^{2H-2}duds
\end{eqnarray*}%
Since $\partial _{x}f(t,x)$ is continuous and $\partial _{x}f(t,x)\leq 0\ $\
for all $t>0$ and $x>0$, we have $\exp \left( \int_{u}^{t}\Theta
_{r}dr\right) \leq 1$. \ As a consequence,%
\begin{eqnarray*}
\lim_{\epsilon \rightarrow 0}\frac{X_{t}^{\epsilon }-X_{t}}{\epsilon }
&=&\alpha _{H}\int_{0}^{t}\int_{0}^{t}\varphi (s)\exp \left(
\int_{u}^{t}\partial _{x}f(r,X_{r})dr\right) |s-u|^{2H-2}duds \\
&=&\left\langle \varphi ,\exp \left( \int_{\cdot }^{t}\partial
_{x}f(r,X_{r})dr\right) \mathbf{1}_{[0,t]}\right\rangle _{\mathcal{H}},
\end{eqnarray*}%
where the limit holds almost surely and in $L^{2}(\Omega )$.  Then, taking
into account (\ref{h0}), by the results of Sugita \cite{Su}, we have $%
X_{t}\in \mathbb{D}^{1,2}$, and
\begin{equation}
DX_{t}=\exp \left( \int_{\cdot }^{t}\partial _{x}f(r,X_{r})dr\right) \mathbf{%
1}_{[0,t]}.  \label{a1}
\end{equation}%
Finally,%
\begin{eqnarray*}
 \left\| DF\right\| _{\mathcal{H}}^{2}&=&\alpha
_{H}\int_{0}^{t}\int_{0}^{t}\exp \left( \int_{s}^{t}\partial
_{x}f(r,X_{r})dr\right)   \\
&&\times  \exp \left( \int_{u}^{t}\partial
_{x}f(r,X_{r})dr\right) |s-u|^{2H-2}duds>0.
\end{eqnarray*}%
\
\end{proof}

In the next proposition we will show the existence
of negative moments for the function  $%
f(t,x)=\frac{K}{x}$, where $K>0$ is a constant.
The proof is based again on the techniques of Malliavin calculus.

\begin{proposition}
Let $\left( X_{t},t\geq 0\right) $ be the solution to the equation%
\begin{equation}
X_{t}=x_{0}+\int_{0}^{t}\frac{K}{X_{s}}ds+B_{t}^{H}.  \label{b}
\end{equation}%
Then, for all $p\geq 1$ and $t\leq \left( \frac{K}{(p+1)H}\right) ^{\frac{1}{%
2H-1}}$ we have $E(X_{t}^{-p})\leq x_{0}^{-p}$.
\end{proposition}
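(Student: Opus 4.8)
The plan is to derive a differential inequality for $E(X_t^{-p})$. Since $x\mapsto x^{-p}$ is unbounded near $0$, I would first work with smooth truncations: for each $n$ pick $g_n\in C^1((0,\infty))$ with $g_n(x)=x^{-p}$ for $x\ge 1/n$, $g_n$ bounded, $g_n'\le 0$, $x\mapsto g_n'(x)/x$ bounded (so that $g_n'(x)=O(x)$ near $0$), $g_n''\le 0$ on $(0,1/n)$, and $g_n(x)\le x^{-p}$ with $g_n(x)\to x^{-p}$ for every fixed $x>0$; such a $g_n$ is obtained by continuing $x^{-p}$ to the left of $1/n$ by a bounded decreasing concave function matching it to first order. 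By the chain rule for the Young integral and equation (\ref{b}),
\begin{equation*}
g_n(X_t)=g_n(x_0)+K\int_0^t\frac{g_n'(X_s)}{X_s}\,ds+\int_0^t g_n'(X_s)\,dB_s^H\,.
\end{equation*}

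Taking expectations is the crucial step. The drift term is integrable because $g_n'(x)/x$ is bounded. For the pathwise integral I would use the transfer formula valid for $H>\frac12$,
\begin{equation*}
\int_0^t g_n'(X_s)\,dB_s^H=\int_0^t g_n'(X_s)\,\delta B_s^H+\alpha_H\int_0^t\int_0^t D_r\big(g_n'(X_s)\big)\,|s-r|^{2H-2}\,dr\,ds\,,
\end{equation*}
where the divergence integral has zero mean. Since $g_n'$ is Lipschitz and $X_s\in\mathbb{D}^{1,2}$, one has $D_r(g_n'(X_s))=g_n''(X_s)D_rX_s$, and by (\ref{a1}) with $f(s,x)=K/x$,
\begin{equation*}
D_rX_s=\exp\Big(-\int_r^s \frac{K}{X_u^{2}}\,du\Big)\mathbf{1}_{[0,s]}(r)\in[0,1]\,.
\end{equation*}
Hence $g_n''(X_s)D_rX_s\le (g_n''(X_s))^{+}$, and since $\alpha_H\int_0^s|s-r|^{2H-2}\,dr=Hs^{2H-1}$, we arrive at
\begin{equation*}
E\big(g_n(X_t)\big)\le g_n(x_0)+\int_0^t E\Big(\frac{K\,g_n'(X_s)}{X_s}+H\,s^{2H-1}\,(g_n''(X_s))^{+}\Big)\,ds\,.
\end{equation*}

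Finally I would check that the integrand is nonpositive on the required time interval. For $x\ge 1/n$, $(g_n'')^{+}(x)=p(p+1)x^{-p-2}$ and $g_n'(x)/x=-p\,x^{-p-2}$, so the bracket equals $p\,x^{-p-2}\big(H(p+1)s^{2H-1}-K\big)$, which is $\le 0$ as soon as $s\le\big(K/((p+1)H)\big)^{1/(2H-1)}$; for $x<1/n$, $(g_n'')^{+}(x)=0$ and $g_n'(x)\le 0$, so the bracket is $\le 0$ in any case. Thus, for $t\le\big(K/((p+1)H)\big)^{1/(2H-1)}$ we get $E(g_n(X_t))\le g_n(x_0)$, and since $g_n(x_0)=x_0^{-p}$ once $1/n<x_0$, Fatou's lemma yields $E(X_t^{-p})=E\big(\lim_n g_n(X_t)\big)\le\liminf_n E\big(g_n(X_t)\big)\le x_0^{-p}$. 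The main obstacle is the rigorous justification of the transfer formula and of the interchange of expectation and integral — checking that $g_n'(X_\cdot)\mathbf{1}_{[0,t]}$ lies in the domain of $\delta$ and that all the integrals converge — together with the construction of the truncations $g_n$ having all the listed properties; granting these technical points, the whole proof rests on the single bound $D_rX_s\le 1$, which is precisely what produces the threshold appearing in the statement.
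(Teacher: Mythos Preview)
Your proposal is correct and follows essentially the same route as the paper: smooth approximation of $x^{-p}$, chain rule for the Young integral, the transfer formula relating the pathwise integral to the divergence plus a trace term, the key bound $D_rX_s=\exp\{-\int_r^s K X_u^{-2}\,du\}\in[0,1]$, and a limiting argument. The only notable difference is the choice of regularization: the paper uses the one-parameter family $\varphi_\epsilon(x)=(\epsilon+x)^{-p}$, which is globally $C^\infty$, monotone in $\epsilon$, and allows a direct appeal to monotone convergence without any case splitting, whereas your truncations $g_n$ require an ad hoc $C^1$ extension with a concavity constraint on $(0,1/n)$ and a separate verification of the sign of the bracket on that region. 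The paper's choice also makes the ``drift versus trace'' comparison slightly cleaner, via the elementary inequality $\frac{1}{X_s(\epsilon+X_s)^{p+1}}\ge\frac{1}{(\epsilon+X_s)^{p+2}}$, so no positive-part splitting is needed.
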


\begin{proof}
Obviously the function  $f(t,x)=\frac{K}{x}$ satisfies all the conditions
(i)-(iii).  From Equation (\ref{a1}), we have
\begin{equation*}
D_{s}X_{t}=-\int_{0}^{t}\frac{K}{X_{r}^{2}}D_{s}X_{r}dr+\mathbf{1}%
_{[0,t]}(s).
\end{equation*}%
So,
\begin{equation*}
D_{s}X_{t}=\exp \{-\int_{s}^{t}\frac{K}{X_{r}^{2}}dr\}\mathbf{1}_{[0,t]}(s).
\end{equation*}%
For any fixed $p\geq 1$, we construct the family of functions $\ \varphi
_{\epsilon }(x)=\frac{1}{(\epsilon +x)^{p}}$, $x>0$. Then $\varphi
_{\epsilon }\uparrow x^{-p}$, as $\epsilon \downarrow 0$. For each $\epsilon
>0$, $\varphi _{\epsilon }$ is a bounded continuously differentiable
function and   all its derivatives are bounded.

By  the chain rule we obtain,
\begin{eqnarray}
\varphi _{\epsilon }(X_{t}) &=&\varphi _{\epsilon
}(x_{0})+\int_{0}^{t}\varphi _{\epsilon }^{\prime }(X_{s})\frac{K}{x_{s}}%
ds+\int_{0}^{t}\varphi _{\epsilon }^{\prime \prime }(X_{s})dB_{s}^{H}  \notag
\\  \notag
&=&\varphi _{\epsilon }(x_{0})-p\int_{0}^{t}\frac{K}{X_{s}(\epsilon
+X_{s})^{p+1}}ds \\
&& -p\int_{0}^{t}\frac{1}{(\epsilon +X_{s})^{p+1}}dB_{s}^{H}
\label{z2}
\end{eqnarray}%
Then, Proposition 5.3.2 in \cite{nualart} implies that%
\begin{eqnarray}
\int_{0}^{t}\frac{1}{(\epsilon +X_{s})^{p+1}}dB_{s}^{H} &=&\delta \left(
\frac{1}{(\epsilon +X_{s})^{p+1}}\mathbf{1}_{[0,t]}(s)\right)   \notag
 -(p+1)\alpha _{H} \\
 &&\times \int_{0}^{t}\int_{0}^{t}\frac{D_{r}X_{s}}{(\epsilon
+X_{s})^{p+2}}\ |s-r|^{2H-2}drds,  \label{z1}
\end{eqnarray}%
where $\delta $ is the divergence operator with respect to fractional
Brownian motion. Substituting (\ref{z1}) into (\ref{z2}) yields%
\begin{eqnarray*}
\varphi _{\epsilon }(X_{t}) &\leq &\varphi _{\epsilon }(x_{0})-p\int_{0}^{t}%
\frac{K}{(\epsilon +X_{s})^{p+2}}ds-p\delta \left( \frac{1}{(\epsilon
+X_{s})^{p+1}}\mathbf{1}_{[0,t]}(s)\right)  \\
&&+p(p+1)Ht^{2H-1}\int_{0}^{t}\frac{1}{(\epsilon +X_{s})^{p+2}}ds \\
&=&\varphi _{\epsilon }(x_{0})-p\int_{0}^{t}\frac{K-(p+1)Ht^{2H-1}}{%
(\epsilon +X_{s})^{p+2}}ds-p\delta \left( \frac{1}{(\epsilon +X_{s})^{p+1}}%
\mathbf{1}_{[0,t]}(s)\right) .
\end{eqnarray*}%
Fix some $t$, such that $K-(p+1)Ht^{2H-1}\geq 0$, that is $t\leq \left(
\frac{K}{(p+1)H}\right) ^{\frac{1}{2H-1}}$. Taking expectation on above
inequality, we have
\begin{equation*}
\mathbb{E}(\varphi _{\epsilon }(X_{t}))\leq \varphi _{\epsilon }(x_{0})\leq
\ x_{0}^{-p}.
\end{equation*}%
Let $\epsilon $ tends to $0$. Applying monotone convergence theorem, for any
fixed $p\geq 1$, we obtain
\begin{equation*}
\mathbb{E}(X_{t}^{-p})\leq \ x_{0}^{-p}\,.
\end{equation*}
\end{proof}

\end{document}